\newtheorem{theorem}{Theorem}[section]
\newtheorem{lemma}[theorem]{Lemma}
\theoremstyle{definition}
\newtheorem*{xrem}{Remark}
\numberwithin{equation}{section}
\begin{document}
	
	
	\baselineskip=17pt

\title{On the average behavior of the Fourier coefficients of $j^{th}$ symmetric power $L$-function over a certain sequences of positive integers}

\author{Anubhav Sharma\footnote{Corresponding author}\\
	School of Mathematics and Statistics\\ 
	University of Hyderabad \\
	P.O, Prof. C.R.Rao Road, Gachibowli\\
	Hyderabad, India\\
	E-mail: 19mmpp02@uohyd.ac.in
	\and 
	Ayyadurai Sankaranarayanan\\
	School of Mathematics and Statistics\\ 
	University of Hyderabad \\
	P.O, Prof. C.R.Rao Road, Gachibowli\\
	Hyderabad, India\\
	E-mail: sank@uohyd.ac.in}

\date{}

\maketitle


\renewcommand{\thefootnote}{}

\footnote{2020 \emph{Mathematics Subject Classification}: Primary 11M06, 11F11, 11F30.}

\footnote{\emph{Key words and phrases}: non-principal Dirichlet character, H\"{o}lder's inequality, $j^{th}$ symmetric power $L$-function, holomorphic cusp form.}

\renewcommand{\thefootnote}{\arabic{footnote}}
\setcounter{footnote}{0}

	
\begin{abstract}
	In this paper, we investigate the average behavior of the $n^{th}$ normalized Fourier coefficients of the $j^{th}$ ($j \geq 2$ be any fixed interger) symmetric power $L$-function (i.e., $L(s,sym^{j}f)$), attached to a primitive holomorphic cusp form $f$ of weight $k$ for the full modular group $SL(2,\mathbb{Z})$ over a certain sequences of positive integers. Precisely, we prove an asymptotic formula with an error term for the sum
	$$\mathlarger{\mathlarger{\sum}}_{\stackrel{a_{1}^{2}+a_{2}^{2}+a_{3}^{2}+a_{4}^{2}+a_{5}^{2}+a_{6}^{2}\leq {x}}{(a_{1},a_{2},a_{3},a_{4},a_{5},a_{6})\in\mathbb{Z}^{6}}}\lambda^{2}_{sym^{j}f}(a_{1}^{2}+a_{2}^{2}+a_{3}^{2}+a_{4}^{2}+a_{5}^{2}+a_{6}^{2}),$$ where $x$ is sufficiently large, and
	$$L(s,sym^{j}f):=\mathlarger{\mathlarger{\sum}}_{n=1}^{\infty}\dfrac{\lambda_{sym^{j}f}(n)}{n^{s}}.$$
	When $j=2$, the error term which we obtain, improves the earlier known result.
\end{abstract}

\section{Introduction}

Let $L(s,f)$ be the $L$-function associated with the primitive holomorphic cusp form $f$ of weight $k$ for the group $SL(2,\mathbb{Z})$ and let $\lambda_f(n)$ be the normalized $n^{th}$ Fourier coefficient of the Fourier expansion of $f(z)$ at the cusp $\infty$, i.e.,
$$f(z)=\mathlarger{\mathlarger{\sum}}_{n=1}^{\infty}\lambda_f(n)n^{\frac{k-1}{2}}e^{2\pi inz},$$ where $\Im(z)>0$. Then the $L$-function attached to $\lambda_f(n)$ is defined as 
$$L(s,f)=\mathlarger{\mathlarger{\sum}}_{n=1}^{\infty}\frac{\lambda_f(n)}{n^{s}}    $$ for $\Re(s)>1$, where $\lambda_f(n)$ are the eigenvalues of all the Hecke operators $T_n$. \\

Let $\chi$ be the Dirichlet character modulo $N$. Then $f$ is known as a modular form of weight $k$ and level $N$ with Nebentypus $\chi$, if
$$f \left( \frac{az+b}{cz+d} \right)=\chi(d)(cz+d)^{k}f(z)  $$ 
for all $z\in\mathbb{H}$ (upper half plane) and $\begin{pmatrix}
	a & b \\
	c & d 
\end{pmatrix} \in \Gamma_0(N)$, where $\Gamma_0(N)$ is the congruence subgroup, i.e., $$\Gamma_0(N)=\left\{
\begin{pmatrix}
	a & b \\
	c & d 
\end{pmatrix} \in SL(2,\mathbb{Z}): c\equiv 0\pmod N
\right\}.$$ \\

In 1974, Deligne \cite{deligne1974conjecture} proved that for any prime $p$ there exist complex numbers $\alpha(p)$ and $\beta(p)$ such that
\begin{eqnarray}
	\alpha(p)+\beta(p)=\lambda_f(p),
\end{eqnarray} 
and
\begin{eqnarray}
	|\alpha(p)|=|\beta(p)|=1=\alpha(p)\beta(p).
\end{eqnarray}
Then $L(s,f)$ can be written as 

\begin{equation*}
	L(s,f)=\prod_{p}^{} \left(1-\frac{\alpha(p)}{p^{s}}\right)^{-1}\left(1-\frac{\beta(p)}{p^{s}}\right)^{-1}.
\end{equation*}
Also, $|\lambda_f(n)|\leq d(n)$, where $d(n)$ is the divisor function.\\

The symmetric square $L$-function is defined as 
\begin{align*}
	L(s,sym^{2}f) & :=\mathlarger{\mathlarger{\sum}}_{n=1}^{\infty}\dfrac{\lambda_{sym^{2}f}(n)}{n^{s}} \\
	& =\prod_{p}^{} \left(1-\frac{\alpha^{2}(p)}{p^{s}}\right)^{-1}\left(1-\frac{\beta^{2}(p)}{p^{s}}\right)^{-1}\left(1-\frac{1}{p^{s}}\right)^{-1},  
\end{align*}
for $\Re(s)>1$, where $\lambda_{sym^{2}f}(n)$ is multiplicative.\\

Several authors have studied the average behavior of these Fourier coefficients. In 2006, Fomenko \cite{fomenko2006identities} was able to prove some results for the symmetric square $L$-functions. He showed that
$$\mathlarger{\mathlarger{\sum}}_{n\leq{x}}\lambda_{sym^{2}f}(n)\ll x^{\frac{1}{2}}\log^{2}x,$$ and further he could establish that 
$$\mathlarger{\mathlarger{\sum}}_{n\leq{x}}\lambda^{2}_{sym^{2}f}(n)=cx+O(x^{\theta}),$$ where $\theta<1$.\\
For more related results, see \cite{tang2013estimates}, \cite{lao2012fourth}, \cite{He2019IntegralPS} and \cite{lau2011sums}. \\ 

In 2013, Zhai \cite{zhai2013average} proved an asymptotic formula for $$\mathlarger{\mathlarger{\sum}}_{\stackrel{a^{2}+b^{2}\leq {x}}{(a,b)\in\mathbb{Z}^{2}}}\lambda^{l}_{f}(a^{2}+b^{2}),$$ 
for $x\geq1$, and $3\leq l\leq8$. For results related to mean square and higher moments of the coefficients of symmetric square $L$-functions on a certain sequence of positive integers, see \cite{anubhav2022-1} and \cite{anubhav2022-2}. \\

In an earlier paper \cite{anubhav2022-3}, we considered the sum $$S^{*}:=\mathlarger{\mathlarger{\sum}}_{\stackrel{a_{1}^{2}+a_{2}^{2}+a_{3}^{2}+a_{4}^{2}+a_{5}^{2}+a_{6}^{2}\leq {x}}{(a_{1},a_{2},a_{3},a_{4},a_{5},a_{6})\in\mathbb{Z}^{6}}}\lambda^{2}_{sym^{2}f}(a_{1}^{2}+a_{2}^{2}+a_{3}^{2}+a_{4}^{2}+a_{5}^{2}+a_{6}^{2}),$$ for sufficiently large $x$ and established the asymptotic formula:
\begin{equation}
	S^{*}=c^{*}x^{3}+O(x^{\frac{14}{5}+\epsilon}), 
\end{equation}
where $c^{*}$ is an effective constant (see Theorem 1.1 of \cite{anubhav2022-3}).\\

The main aim of this paper is to generalize and improve the result obtained in \cite{anubhav2022-3} by using recent celebrated work (\cite{Newton-thorne-1, Newton-thorne-2}) of Newton and Thorne, and better subconvexity bounds for the related $L$-functions. More precisely, we prove		

	\begin{theorem}
	Let $j\geq2$ be any fixed integer. For sufficiently large $x$, and $\epsilon>0$ any small constant, we have
	$$\mathlarger{\mathlarger{\sum}}_{\stackrel{a_{1}^{2}+a_{2}^{2}+a_{3}^{2}+a_{4}^{2}+a_{5}^{2}+a_{6}^{2}\leq {x}}{(a_{1},a_{2},a_{3},a_{4},a_{5},a_{6})\in\mathbb{Z}^{6}}}\lambda^{2}_{sym^{j}f}(a_{1}^{2}+a_{2}^{2}+a_{3}^{2}+a_{4}^{2}+a_{5}^{2}+a_{6}^{2})=c(j)x^{3}+O\left(x^{3-\frac{6}{3(j+1)^{2}+1}+\epsilon}\right) ,$$ where $c(j)$ is an effective constant defined by 
	$$c(j)=\frac{16}{3}L(3,\chi)\prod\limits_{n=1}^{j}L(1,sym^{2n}f)L(3,sym^{2n}f\otimes\chi)H_{j}(3),$$ and $\chi$ is the non-principal Dirichlet character modulo $4$.
\end{theorem}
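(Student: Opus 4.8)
The plan is to convert the six–squares sum into a weighted Dirichlet–series problem. Writing $a(n):=\lambda_{sym^{j}f}(n)$ and letting $r_6(n)$ denote the number of representations of $n$ as a sum of six squares, the sum in question equals $\sum_{n\le x}a(n)^2 r_6(n)$. I would first substitute the classical Jacobi-type identity
$$r_6(n)=16\sum_{d\mid n}\chi(n/d)\,d^2-4\sum_{d\mid n}\chi(d)\,d^2,$$
with $\chi$ the non-principal character modulo $4$, so that the problem is governed by the Dirichlet series
$$\mathcal{D}(s):=\sum_{n=1}^{\infty}\frac{a(n)^2 r_6(n)}{n^{s}}.$$
Since $a(n)^2$ and each divisor sum above are multiplicative, $\mathcal{D}(s)$ has an Euler product, and the central task is to express it through standard automorphic $L$-functions.

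Second — and this is where the automorphy results of Newton and Thorne enter — I would use that $sym^{2m}f$ is cuspidal for every $m$, together with the local relation $\lambda_{sym^{j}f}(p)^2=\sum_{m=0}^{j}\lambda_{sym^{2m}f}(p)$, to identify $\sum_n a(n)^2 n^{-w}$ with $\zeta(w)\prod_{m=1}^{j}L(w,sym^{2m}f)$ up to an Euler product converging in a larger half-plane. Convolving coefficient-wise against the Eisenstein factor with coefficients $\sum_{d\mid n}\chi(n/d)d^2$ — whose $L$-function is $\zeta(w-2)L(w,\chi)$ — produces, by the Rankin–Selberg mechanism, a shift by $2$ in one set of parameters and a twist by $\chi$ in the other. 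A prime-by-prime computation should then yield
$$\mathcal{D}(s)=\zeta(s-2)\,L(s,\chi)\prod_{m=1}^{j}L(s-2,sym^{2m}f)\,L(s,sym^{2m}f\otimes\chi)\cdot H_j(s),$$
where $H_j(s)$ is an Euler product that, as must be verified by estimating the local factors, converges absolutely and is holomorphic in a half-plane reaching to the left of $\Re s=5/2$; the secondary $-4(\cdots)$ term has the same shape but with spine $\zeta(s)L(s-2,\chi)$, hence is holomorphic for $\Re s>1$ and contributes only lower-order terms. Thus the only pole of $\mathcal{D}(s)$ in $\Re s>1$ is the simple pole of $\zeta(s-2)$ at $s=3$.

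Third, I would apply a truncated Perron formula at abscissa $3+\epsilon$ and move the contour to the line $\Re s=5/2$, so that $s-2$ lies on $\Re=1/2$. The residue at $s=3$ is exactly $c(j)x^3$: the residue $1$ of $\zeta(s-2)$, the factor $x^3/3$ from $x^s/s$, and the values $L(3,\chi)$, $L(1,sym^{2m}f)$, $L(3,sym^{2m}f\otimes\chi)$, $H_j(3)$ reproduce the stated constant, the $16$ being the leading coefficient of $r_6$. On the line $\Re s=5/2$ only $\zeta(s-2)$ and $\prod_{m=1}^{j}L(s-2,sym^{2m}f)$ sit in their critical strips, of total degree $1+\sum_{m=1}^{j}(2m+1)=(j+1)^2$, while the $\chi$-twists and $L(s,\chi)$ remain in their region of absolute convergence. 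Using the functional equations together with subconvex bounds for $\zeta$ and for the symmetric-power $L$-functions — distributed across the product by Hölder's inequality — one bounds the vertical integral by $x^{5/2}T^{\beta}$, with $\beta=\beta(j)$ the resulting growth exponent, and the truncation error by $x^{3+\epsilon}/T$. Balancing the two and optimising $T=x^{6/(3(j+1)^2+1)}$ yields $O\!\left(x^{3-6/(3(j+1)^2+1)+\epsilon}\right)$.

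The main obstacle is twofold: establishing the precise factorisation — in particular verifying that the residual Euler product $H_j(s)$ extends holomorphically, with polynomial growth, past the contour $\Re s=5/2$, since the coefficient-wise convolution with the weight $d^2$ pushes the natural abscissa of the correction factor rightwards — and securing subconvex bounds of the required quality for the whole degree-$(j+1)^2$ product. The latter is exactly where the sharper subconvexity inputs improve on the earlier exponent $14/5$ for $j=2$; the Hölder splitting is what converts pointwise subconvexity for the individual $L(s-2,sym^{2m}f)$ into a usable bound for their product, and the precise value of $\beta(j)$, hence of the final exponent, is dictated by the strength of these bounds.
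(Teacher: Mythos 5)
Your proposal follows essentially the same route as the paper: the Jacobi six-squares identity splits the sum into two multiplicative pieces, each generating Dirichlet series is factored as $\zeta(s-2)L(s,\chi)\prod_{m=1}^{j}L(s-2,sym^{2m}f)L(s,sym^{2m}f\otimes\chi)$ (respectively the $\zeta(s)L(s-2,\chi)$ spine) times an Euler product converging for $\Re(s)>\frac{5}{2}$, and truncated Perron plus a contour shift to $\Re(s)=\frac{5}{2}+\epsilon$ with moment bounds and subconvexity distributed by H\"older gives the residue $c(j)x^{3}$ and the stated error after optimising $T$. The decomposition, the key lemmas, and the final balancing $T\asymp x^{6/(3(j+1)^{2}+1)}$ all match the paper's argument.
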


\begin{xrem}
	\normalfont{
		When $j=2$, Theorem 1.1 gives the error term $O(x^{\frac{39}{14}+\epsilon})$, which improves the error term in (1.3).  }
\end{xrem}

	\section{Preliminaries and some important lemmas}

Let $r_{k}(n):=\#\{(n_{1},n_{2},\dots,n_{k})\in\mathbb{Z}^{k}:n_{1}^{2}+n_{2}^{2}+\cdots+n_{k}^{2}=n\}$ allowing zeros, distinguishing signs, and order.
We will be concerned with the function $r_{6}(n)$.

\begin{lemma}
	For any positive integer $n$, we have
	\begin{eqnarray}
		r_{6}(n)=16\mathlarger{\mathlarger{\sum}}_{d|n}\chi(d')d^{2}-4\mathlarger{\mathlarger{\sum}}_{d|n}\chi(d)d^{2},
	\end{eqnarray}
	where $dd'=n$, and $\chi$ is the non-principal Dirichlet character modulo $4$, i.e.,
	$$\chi(n)=\begin{cases}
		1 & \text{if $n\equiv 1\pmod4$}\\    
		-1 & \text{if $n\equiv -1\pmod4$}\\
		0 & \text{if $n\equiv 0\pmod2$}
	\end{cases}.
	$$
\end{lemma}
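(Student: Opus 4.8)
The plan is to recognize $r_6(n)$ as the Fourier coefficients of the sixth power of Jacobi's theta function and to pin down the resulting modular form inside a space that is entirely spanned by Eisenstein series, so that the identity becomes an exact equality of divisor-sum expressions with no cusp-form error term.

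First I would set $\theta(z) = \sum_{n \in \mathbb{Z}} e^{2\pi i n^2 z}$, so that by the very definition of $r_6(n)$ one has $\theta(z)^6 = \sum_{n \geq 0} r_6(n) e^{2\pi i n z}$. The classical transformation law for $\theta$ shows that $\theta^6$ is a holomorphic modular form of weight $3$ on the congruence subgroup $\Gamma_0(4)$ with nebentypus $\chi$, the non-principal character modulo $4$; holomorphy at all three cusps of $\Gamma_0(4)$ follows from the controlled growth of $\theta$, so $\theta^6 \in M_3(\Gamma_0(4),\chi)$.

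The key structural step is to show that this space is two-dimensional and contains no cusp forms, $S_3(\Gamma_0(4),\chi) = 0$, so that $M_3(\Gamma_0(4),\chi)$ coincides with its Eisenstein subspace. I would then exhibit an explicit basis of that subspace by the two weight-$3$ Eisenstein series attached to $\chi$ (indexed by the factorizations $\chi = 1\cdot\chi$ and $\chi = \chi\cdot 1$), whose Fourier coefficients for $n\geq 1$ are precisely the twisted divisor sums $\sum_{d \mid n}\chi(n/d)d^2$ and $\sum_{d \mid n}\chi(d)d^2$ appearing in the statement. Writing $\theta^6 = a\,E_1 + b\,E_2$ and comparing the first two or three Fourier coefficients — using the elementary values $r_6(0)=1$, $r_6(1)=12$, $r_6(2)=60$, which one checks directly satisfy the claimed right-hand side with $a=16$, $b=-4$ — would determine the constants and hence the identity.

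The main obstacle is the modular-forms input: verifying the exact transformation behavior of $\theta^6$ on $\Gamma_0(4)$ with the correct character, and, crucially, establishing the vanishing of the cusp space in weight $3$, since it is precisely this vanishing that makes the representation an exact identity rather than a mere asymptotic relation. An alternative, more self-contained route avoids the cusp-space computation by deriving the identity directly from theta-function manipulations (the Jacobi triple product together with Lambert-series expansions for $\theta^2$ and $\theta^4$), but this trades conceptual clarity for a longer chain of elementary though delicate $q$-series calculations; in either case the result is classical and may also simply be cited.
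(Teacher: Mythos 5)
Your proposal is correct in outline, but it takes a genuinely different route from the paper, which offers no argument at all: the authors simply cite Lemma 2.1 of their earlier work (the identity is Jacobi's classical six-square formula, found e.g. in Hardy--Wright). Your sketch is the standard modular-forms proof: $\theta^6\in M_3(\Gamma_0(4),\chi)$ since $\theta^2$ has weight $1$ and nebentypus $\chi$ on $\Gamma_0(4)$, the dimension formula gives $\dim M_3(\Gamma_0(4),\chi)=2$ with $S_3(\Gamma_0(4),\chi)=0$, and the two Eisenstein series attached to the factorizations $\chi=\mathbf{1}\cdot\chi$ and $\chi=\chi\cdot\mathbf{1}$ have $n$-th coefficients $\sum_{d\mid n}\chi(d)d^2$ and $\sum_{d\mid n}\chi(n/d)d^2$ respectively, so matching two Fourier coefficients pins down the combination. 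Your numerical anchors are right ($r_6(0)=1$, $r_6(1)=12$, $r_6(2)=60$, and indeed $16\cdot 1-4\cdot 1=12$, $16\cdot 4-4\cdot 1=60$), and since the space is two-dimensional, two coefficients suffice with a third as a consistency check. What your approach buys is an actual self-contained justification of an identity the paper treats as a black box; what the citation buys the authors is brevity, since the result is classical and the real content of their paper lies elsewhere. The only caveat is that your two ``obstacles'' (the exact transformation law of $\theta^6$ and the vanishing of the cusp space) are genuine steps that must be carried out, but both are routine consequences of the standard theta transformation formula and the dimension formulas for $M_k(\Gamma_0(N),\chi)$, so there is no gap in principle.
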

\begin{proof}
	See, for instance, Lemma 2.1 of \cite{anubhav2022-3}.
\end{proof}

	We can reframe (2.1) as
\begin{align*}r_{6}(n)&=16\mathlarger{\mathlarger{\sum}}_{d|n}\chi(d)\frac{n^{2}}{d^{2}}-4\mathlarger{\mathlarger{\sum}}_{d|n}\chi(d)d^{2}\\
	&=16l(n)-4v(n)&&(\mbox{say})\\
	&=l_{1}(n)-v_{1}(n) ,
\end{align*}
where $l_{1}(n)=16l(n)$ and $v_{1}(n)=4v(n)$.\\

Observe that $l(n)$ is a multiplicative function since $\chi(d)$ is completely multiplicative and hence multiplicative and $f(d)=\frac{n^{2}}{d^{2}}$ is also completely multiplicative. Therefore, $\chi(d)\dfrac{n^{2}}{d^{2}}$ is multiplicative and if $g(d)$ is any multiplicative function, then $\sum\limits_{d|n}^{} g(d)$ is also multiplicative. Similarly, $v(n)$ is also multiplicative.\\

Note that,
$$l(p)=p^{2}+\chi(p),$$
$$l(p^{2})=p^{4}+p^{2}\chi(p)+\chi(p^{2}),$$ and
$$v(p)=1+p^{2}\chi(p),$$
$$v(p^{2})=1+p^{2}\chi(p)+p^{4}\chi(p^{2}).$$

Also, we can write
\begin{align}
	&\mathlarger{\mathlarger{\sum}}_{\stackrel{a_{1}^{2}+a_{2}^{2}+a_{3}^{2}+a_{4}^{2}+a_{5}^{2}+a_{6}^{2}\leq {x}}{(a_{1},a_{2},a_{3},a_{4},a_{5},a_{6})\in\mathbb{Z}^{6}}}\lambda^{2}_{sym^{2}f}(a_{1}^{2}+a_{2}^{2}+a_{3}^{2}+a_{4}^{2}+a_{5}^{2}+a_{6}^{2}) \nonumber\\ &=\mathlarger{\mathlarger{\sum}}_{n\leq{x}}\lambda^{2}_{sym^{2}f}(n)\mathlarger{\mathlarger{\sum}}_{\stackrel{n=a_{1}^{2}+a_{2}^{2}+a_{3}^{2}+a_{4}^{2}+a_{5}^{2}+a_{6}^{2}}{(a_{1},a_{2},a_{3},a_{4},a_{5},a_{6})\in\mathbb{Z}^{6}}}1 \nonumber\\
	& =\mathlarger{\mathlarger{\sum}}_{n\leq{x}}\lambda^{2}_{sym^{2}f}(n)r_{6}(n) \nonumber\\
	& =\mathlarger{\mathlarger{\sum}}_{n\leq{x}}\lambda^{2}_{sym^{2}f}(n)\left(l_{1}(n)-v_{1}(n)\right) \\ 
	& =16\mathlarger{\mathlarger{\sum}}_{n\leq{x}}\lambda^{2}_{sym^{2}f}(n)l(n)-4\mathlarger{\mathlarger{\sum}}_{n\leq{x}}\lambda^{2}_{sym^{2}f}(n)v(n), \nonumber
\end{align}
where
$l(n)=\mathlarger{\mathlarger{\sum}}_{d|n}\chi(d)\frac{n^{2}}{d^{2}}$, and $v(n)=\mathlarger{\mathlarger{\sum}}_{d|n}\chi(d)d^{2}$.\\\\

For $j\geq2$ (an integer), and $\Re(s)>1$, the $j^{th}$ symmetric power $L$-function of $f$ is defined as :
\begin{align}
	L(s,sym^{j}f) & :=\mathlarger{\mathlarger{\sum}}_{n=1}^{\infty}\dfrac{\lambda_{sym^{j}f}(n)}{n^{s}}  \nonumber \\
	& =\prod_{p}^{}\prod_{i=0}^{j} \left(1-\frac{\alpha^{j-i}(p)\beta^{i}(p)}{p^{s}}\right)^{-1}  .
\end{align}

Observe that,
\begin{eqnarray}
	\lambda_{sym^{j}f}(p)=\mathlarger{\mathlarger{\sum}}_{m=0}^{j}\alpha^{j-m}(p)\beta^{m}(p),
\end{eqnarray}
Since $\lambda_{sym^{j}f}(n)$ is a multiplicative function, and $|\lambda_{sym^{j}f}(n)|\leq d_{j+1}(n)$ (from (1.1), and (1.2)), where $d_{j+1}(n)$ is the number of ways of expressing $n$ as a product of $j+1$ factors), so we can write the Euler product of $	L(s,sym^{j}f)$ as
\begin{equation}
	\prod_{p}^{} \left(1+\frac{\lambda_{sym^{j}f}(p)}{p^{s}}+\cdots+\frac{\lambda_{sym^{j}f}(p^{l})}{p^{ls}}+\cdots\right).
\end{equation}
Comparing (2.3), and (2.5), we get (2.4).\\
Also, due to Hecke
\begin{eqnarray}
	\lambda_{sym^{j}f}(p)=\lambda_{f}(p^{j}).
\end{eqnarray}
Further, note that
\begin{eqnarray}
	\lambda_{f}^{2}(p^{j})=1+\mathlarger{\mathlarger{\sum}}_{l=1}^{j}\lambda_{f}(p^{2l}),
\end{eqnarray}
since,

\begin{align*}
	\lambda_{f}^{2}(p^{j})=&\left(\mathlarger{\mathlarger{\sum}}_{m=0}^{j}\alpha^{j-m}(p)\beta^{m}(p)\right)^{2}\\ =&\left(\mathlarger{\mathlarger{\sum}}_{m=0}^{j}\alpha^{j-m}(p)\beta^{m}(p)\right)\left(\mathlarger{\mathlarger{\sum}}_{m^{\prime}=0}^{j}\alpha^{j-m^{\prime}}(p)\beta^{m^{\prime}}(p)\right)\\
	=&\mathlarger{\mathlarger{\sum}}_{m=0}^{j}\mathlarger{\mathlarger{\sum}}_{m^{\prime}=0}^{j}\left(\alpha^{2j-(m+m^{\prime})}(p)\right)\left(\beta^{(m+m^{\prime})}(p)\right)\\
	&(\mbox{We put $ m+m^{\prime}=t$. Observe that for every fixed integer $t$ in the interval $[0,2l]$} \\ 
	& \mbox{and for every fixed integer $m$ in the interval $[0,l]$, there is a unique integer $m^{\prime}$  } \\ 
	&\mbox{in the interval $[0,l]$ satisfying $ m+m^{\prime}=t$ and thus,})\\
	=&\mathlarger{\mathlarger{\sum}}_{l=0}^{j}\left(\mathlarger{\mathlarger{\sum}}_{t=0}^{2l}\alpha^{2j-t}(p)\beta^{t}(p)\right)        \\   
	=&1+\mathlarger{\mathlarger{\sum}}_{l=1}^{j}\left(\mathlarger{\mathlarger{\sum}}_{t=0}^{2l}\alpha^{2j-t}(p)\beta^{t}(p)\right) \\
	=&1+\mathlarger{\mathlarger{\sum}}_{l=1}^{j}\lambda_{f}(p^{2l}).
\end{align*}

\begin{lemma}
	Let $f$ be a normalized primitive holomorphic cusp form of weight $k$ for $SL(2,\mathbb{Z})$, and let $\lambda_{sym^{j}f}(n)$ be the $n^{th}$ normalized Fourier coefficient of the $j^{th}$ symmetric power $L$-function associated to $f$. If
	$$F_{j}(s)=\mathlarger{\mathlarger{\sum}}_{n=1}^{\infty}\dfrac{\lambda^{2}_{sym^{j}f}(n)l(n)}{n^{s}} ,$$ for $\Re(s)>3$, then
	$$F_{j}(s)=G_{j}(s)H_{j}(s),$$ where
	$$
	G_{j}(s):=\zeta(s-2)L(s,\chi)\prod\limits_{n=1}^{j}L(s-2,sym^{2n}f)L(s,sym^{2n}f\otimes\chi) ,
	$$ 
	and $\chi$ is the non-principal character modulo $4$. \\
	Here, $H_{j}(s)$ is a Dirichlet series which converges uniformly, and absolutely in the half plane $\Re(s)>\frac{5}{2}$, and $H_{j}(s)\ne 0$ on $\Re(s)=3$.
\end{lemma}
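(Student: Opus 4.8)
The plan is to prove the factorization by a prime‑by‑prime comparison of Euler products, the whole identity being forced by a single algebraic relation at each prime. Since $\chi$ is completely multiplicative and $d\mapsto n^{2}/d^{2}$ is completely multiplicative, the divisor sum $l(n)=\sum_{d\mid n}\chi(d)\,n^{2}/d^{2}$ is multiplicative (as already observed in the text), and so is $\lambda^{2}_{sym^{j}f}(n)$; hence their product is multiplicative. Using Deligne's bound $|\lambda_{sym^{j}f}(n)|\le d_{j+1}(n)$ together with the trivial estimate $l(n)\ll n^{2}$, the series $F_{j}(s)$ converges absolutely for $\Re(s)>3$ and there factors as
$$F_{j}(s)=\prod_{p}\left(\sum_{k=0}^{\infty}\frac{\lambda^{2}_{sym^{j}f}(p^{k})\,l(p^{k})}{p^{ks}}\right).$$
The same holds for $G_{j}(s)$ on $\Re(s)>3$, since $\zeta(s-2)$ and each $L(s-2,sym^{2n}f)$ converge there, while $L(s,\chi)$ and each $L(s,sym^{2n}f\otimes\chi)$ converge for $\Re(s)>1$.

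First I would record the local data for $F_{j}$. From $l(p^{k})=\sum_{i=0}^{k}\chi(p^{i})p^{2(k-i)}$ one gets $l(p^{k})\ll p^{2k}$ and in particular $l(p)=p^{2}+\chi(p)$; combining the Hecke relation (2.6) with (2.7) gives the crucial identity
$$\lambda^{2}_{sym^{j}f}(p)=\lambda_{f}^{2}(p^{j})=1+\sum_{l=1}^{j}\lambda_{sym^{2l}f}(p).$$
Hence the $p^{-s}$‑coefficient of the local factor of $F_{j}$ equals $\lambda^{2}_{sym^{j}f}(p)\,l(p)=\bigl(1+\sum_{l=1}^{j}\lambda_{sym^{2l}f}(p)\bigr)(p^{2}+\chi(p))$, the constant term is $1$, and by Deligne's bound the $p^{-ks}$‑coefficient is $\ll_{j}p^{2k}$ uniformly in $p$.

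Next I would expand $G_{j}$ to the same order. The local factors of $\zeta(s-2)$, $L(s,\chi)$, $L(s-2,sym^{2n}f)$ and $L(s,sym^{2n}f\otimes\chi)$ have $p^{-s}$‑coefficients $p^{2}$, $\chi(p)$, $\lambda_{sym^{2n}f}(p)p^{2}$ and $\chi(p)\lambda_{sym^{2n}f}(p)$ respectively; multiplying the factors and collecting the $p^{-s}$‑term, the $p^{-s}$‑coefficient of $G_{j}$'s local factor is
$$(p^{2}+\chi(p))+\sum_{n=1}^{j}\lambda_{sym^{2n}f}(p)(p^{2}+\chi(p))=\left(1+\sum_{n=1}^{j}\lambda_{sym^{2n}f}(p)\right)(p^{2}+\chi(p)),$$
which is exactly the $p^{-s}$‑coefficient of $F_{j}$'s local factor. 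This is precisely where the shift $s\mapsto s-2$ and the $\chi$‑twist do their work: the leading part $p^{2}$ of $l(p)$ is absorbed by $\zeta(s-2)$ and the $L(s-2,sym^{2n}f)$, while the subleading part $\chi(p)$ is absorbed by $L(s,\chi)$ and the $L(s,sym^{2n}f\otimes\chi)$.

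Finally I would set $H_{j}(s):=F_{j}(s)/G_{j}(s)$ and read off its Euler product from the local ratios. Because the constant terms ($=1$) and the $p^{-s}$‑coefficients of the two local factors agree, each local factor of $H_{j}$ takes the shape $1+O_{j}(p^{4-2s})$ on $\Re(s)>\tfrac52$, with implied constant uniform in $p$: after the leading cancellation the first surviving contribution is the $p^{-2s}$‑term of size $O(p^{4})$, and the remaining geometric tail $\sum_{k\ge 2}O_{j}(p^{k(2-s)})$ is dominated by it there. Since $\sum_{p}p^{4-2\Re(s)}$ converges precisely for $\Re(s)>\tfrac52$, the product for $H_{j}$ converges absolutely and locally uniformly on that half‑plane, giving the asserted holomorphy; and as each local factor is a finite, non‑vanishing power series in $p^{-s}$ at $s=3$ (the pole of $\zeta(s-2)$ at $s=3$ arises from the product over $p$, not from any single factor, and $3>\tfrac52$), the absolutely convergent value $H_{j}(3)$ is non‑zero. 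The one genuine obstacle is the bookkeeping: one must check uniformly in $p$ that, after the leading cancellation, the discrepancy really enters only at order $p^{4-2s}$ and that no term of size exceeding $p^{4}$ survives at order $p^{-2s}$ — this is exactly what pins the abscissa of convergence at $\Re(s)=\tfrac52$. Everything upstream reduces to the single identity displayed in the second paragraph.
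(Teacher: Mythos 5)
Your proposal is correct and follows essentially the same route as the paper: both factor $F_{j}(s)$ into an Euler product, use the identity $\lambda_{f}^{2}(p^{j})=1+\sum_{l=1}^{j}\lambda_{sym^{2l}f}(p)$ together with $l(p)=p^{2}+\chi(p)$ to match the $p^{-s}$-coefficients of the local factors of $F_{j}$ and $G_{j}$, and then bound the local ratios by $1+O(p^{4+\epsilon-2s})$ to get absolute convergence of $H_{j}$ for $\Re(s)>\tfrac{5}{2}$. The only cosmetic difference is that the paper carries an $\epsilon$ in the coefficient bound ($b(n)\ll_{\epsilon}n^{2+\epsilon}$) where you write $\ll_{j}p^{2k}$; since $d_{j+1}(p^{k})$ grows polynomially in $k$ this costs a harmless $p^{k\epsilon}$, which does not affect the abscissa $\tfrac{5}{2}$.
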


\begin{proof}
	We observe that $\lambda^{2}_{sym^{j}f}(n)l(n)$ is multiplicative and hence
	\begin{equation}
		F_{j}(s)=\prod_{p}^{} \left(1+\frac{\lambda^{2}_{sym^{j}f}(p)l(p)}{p^{s}}+\cdots+\frac{\lambda^{2}_{sym^{j}f}(p^{m})l(p^{m})}{p^{ms}}+\cdots\right).
	\end{equation}
	
	Using (2.6) and (2.7), we note that,
	\begin{align*}
		\lambda^{2}_{sym^{2}f}(p)l(p) & = \lambda_{f}^{2}(p^{j})\left(p^{2}+\chi(p)\right) \\
		&=\left(1+\mathlarger{\mathlarger{\sum}}_{l=1}^{j}\lambda_{f}(p^{2l})\right)\left(p^{2}+\chi(p)\right) \\
		&=\left(1+\mathlarger{\mathlarger{\sum}}_{l=1}^{j}\lambda_{sym^{2l}f}(p)\right)\left(p^{2}+\chi(p)\right) \\
		& =p^{2}+\chi(p)+\mathlarger{\mathlarger{\sum}}_{l=1}^{j}\lambda_{sym^{2l}f}(p)p^{2}+\mathlarger{\mathlarger{\sum}}_{l=1}^{j}\lambda_{sym^{2l}f}(p)\chi(p) \\
		& =:b(p) .             &&(\mbox{say})
	\end{align*} 
	
	From the structure of $b(p)$, we define the coefficients $b(n)$ as
	
	\begin{align*}
		\mathlarger{\mathlarger{\sum}}_{n=1}^{\infty}\frac{b(n)}{n^{s}}=&\zeta(s-2)L(s,\chi)\prod\limits_{n=1}^{j}L(s-2,sym^{2n}f)L(s,sym^{2n}f\otimes\chi),
	\end{align*}
	which is absolutely convergent in $\Re(s)>3$. We also note that,
	\begin{align*}
		&\prod_{p}^{} \left(1+\frac{b(p)}{p^{s}}+\cdots+\frac{b(p^{m})}{p^{ms}}+\cdots\right) \\
		&=\zeta(s-2)L(s,\chi)\prod\limits_{n=1}^{j}L(s-2,sym^{2n}f)L(s,sym^{2n}f\otimes\chi) \\
		&=G_{j}(s), &&(\mbox{say})
	\end{align*}
	for $\Re(s)>3$. Observe that $b(n)\ll_{\epsilon}n^{2+\epsilon}$ for any small positive constant $\epsilon$. \\
	
	Now, we note that
	\begin{align*}
		\Bigg|\frac{b(p)}{p^{s}}+\frac{b(p^{2})}{p^{2s}}+\cdots+\frac{b(p^{m})}{p^{ms}}+\cdots\Bigg|&\ll\mathlarger{\mathlarger{\sum}}_{m=1}^{\infty}\frac{p^{(2+\epsilon)m}}{p^{m\sigma}} \\
		&\leq \mathlarger{\mathlarger{\sum}}_{m=1}^{\infty}\frac{p^{(2+\epsilon)m}}{p^{(3+2\epsilon)m}} &&(\mbox{in $\Re(s)\geq 3+2\epsilon$}) \\
		&=\mathlarger{\mathlarger{\sum}}_{m=1}^{\infty}\frac{1}{p^{(1+\epsilon)m}} \\
		&=\dfrac{\dfrac{1}{p^{1+\epsilon}}}{1-\dfrac{1}{p^{1+\epsilon}}} \\
		&=\dfrac{1}{p^{1+\epsilon}-1}\\
		&<1.
	\end{align*}
	
	Let us write 
	$$A=\frac{\lambda^{2}_{sym^{j}f}(p)l(p)}{p^{s}}+\cdots+\frac{\lambda^{2}_{sym^{j}f}(p^{m})l(p^{m})}{p^{ms}}+\cdots,$$ and 
	$$B=\frac{b(p)}{p^{s}}+\cdots+\frac{b(p^{m})}{p^{ms}}+\cdots.$$
	From the above calculations, we observe that $|B|<1$ in $\Re(s)\geq 3+2\epsilon$. \\\\
	
	Note that,
	\begin{align*}
		\frac{1+A}{1+B}&=(1+A)(1-B+B^{2}-B^{3}+\cdots) &&(\mbox{in $\Re(s)\geq 3+2\epsilon$}) \\
		&=1+A-B-AB+ \mbox{higher terms} \\
		&=1+\frac{\lambda^{2}_{sym^{j}f}(p^{2})l(p^{2})-b(p^{2})}{p^{2s}}+\cdots+\frac{c_{m}(p^{m})}{p^{ms}}+\cdots,
	\end{align*}
	with $c_{m}(n)\ll_{\epsilon}n^{2+\epsilon}$. So,
	\begin{align*}
		\prod_{p}^{}\left(\frac{1+A}{1+B}\right)&=	\prod_{p}^{}\left(1+\frac{\lambda^{2}_{sym^{j}f}(p^{2})l(p^{2})-b(p^{2})}{p^{2s}}+\cdots+\frac{c_{m}(p^{m})}{p^{ms}}+\cdots\right) \\
		&\ll_{\epsilon} 1 . \hspace{6cm} (\mbox{in $\Re(s)>\frac{5}{2}$})
	\end{align*} 
	
	Thus,
	\begin{align*}
		H_{j}(s)&:=\frac{F_{j}(s)}{G_{j}(s)} \\
		&=\prod_{p}^{}\left(\frac{1+A}{1+B}\right) \\
		&\ll_{\epsilon} 1, &&(\mbox{in $\Re(s)>\frac{5}{2}$})
	\end{align*} 
	and also $H_{j}(s)\neq0$ on $\Re(s)=3$. 
	
\end{proof}

\begin{lemma}
	Let $f$ be a normalized primitive holomorphic cusp form of weight $k$ for $SL(2,\mathbb{Z})$, and let $\lambda_{sym^{j}f}(n)$ be the $n^{th}$ normalized Fourier coefficient of the $j^{th}$ symmetric power $L$-function associated to $f$. If
	$$\widetilde{F}_{j}(s)=\mathlarger{\mathlarger{\sum}}_{n=1}^{\infty}\dfrac{\lambda^{2}_{sym^{j}f}(n)v(n)}{n^{s}} ,$$ for $\Re(s)>3$, then
	$$\widetilde{F}_{j}(s)=\widetilde{G}_{j}(s)\widetilde{H}_{j}(s),$$ where
	\begin{align*}
		\widetilde{G}_{2}(s):=&\zeta(s)L(s-2,\chi)\prod\limits_{n=1}^{j}L(s,sym^{2n}f)L(s-2,sym^{2n}f\otimes\chi) ,
	\end{align*} and $\chi$ is the non-principal character modulo $4$. \\
	Here $\widetilde{H}_{j}(s)$ is a Dirichlet series which converges uniformly, and absolutely in the half plane $\Re(s)>\frac{5}{2}$, and $\widetilde{H}_{j}(s)\ne 0$ on $\Re(s)=3$.
\end{lemma}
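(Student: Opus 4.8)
The plan is to mirror the proof of Lemma 2.3, since $\widetilde{F}_{j}(s)$ differs from $F_{j}(s)$ only in that the completely multiplicative weight $l(p)=p^{2}+\chi(p)$ is replaced by $v(p)=1+p^{2}\chi(p)$; this swaps the role of the factor $p^{2}$ between two pairs of $L$-functions and accounts for the shifts $s\mapsto s-2$ appearing in $\widetilde{G}_{j}$. First I would record that $\lambda^{2}_{sym^{j}f}(n)v(n)$ is multiplicative, being a product of the multiplicative functions $\lambda^{2}_{sym^{j}f}(n)$ and $v(n)$, so that for $\Re(s)>3$
$$\widetilde{F}_{j}(s)=\prod_{p}\left(1+\frac{\lambda^{2}_{sym^{j}f}(p)v(p)}{p^{s}}+\cdots+\frac{\lambda^{2}_{sym^{j}f}(p^{m})v(p^{m})}{p^{ms}}+\cdots\right).$$

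Next I would compute the prime-level factor. Using (2.6), (2.7) and $\lambda_{f}(p^{2l})=\lambda_{sym^{2l}f}(p)$ exactly as in Lemma 2.3 gives
$$\lambda^{2}_{sym^{j}f}(p)v(p)=\left(1+\sum_{l=1}^{j}\lambda_{sym^{2l}f}(p)\right)\left(1+p^{2}\chi(p)\right)=1+p^{2}\chi(p)+\sum_{l=1}^{j}\lambda_{sym^{2l}f}(p)+\sum_{l=1}^{j}\lambda_{sym^{2l}f}(p)p^{2}\chi(p)=:\widetilde{b}(p).$$
Matching these four groups of terms against the $p^{-s}$-coefficients of standard Euler products identifies the constant $1$ with $\zeta(s)$, the term $p^{2}\chi(p)$ with $L(s-2,\chi)$, each $\lambda_{sym^{2l}f}(p)$ with $L(s,sym^{2l}f)$, and each $\lambda_{sym^{2l}f}(p)p^{2}\chi(p)$ with $L(s-2,sym^{2l}f\otimes\chi)$ (recall that twisting by $\chi$ multiplies the $p$-th coefficient by $\chi(p)$, and the shift $s\mapsto s-2$ supplies the factor $p^{2}$). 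This is precisely $\widetilde{G}_{j}(s)$, which I would then present as the generating series $\sum_{n}\widetilde{b}(n)n^{-s}$ of the associated multiplicative coefficients $\widetilde{b}(n)$; it converges absolutely in $\Re(s)>3$ because the shifted factors $L(s-2,\chi)$ and $L(s-2,sym^{2l}f\otimes\chi)$ have abscissa $\Re(s)=3$, while $\zeta(s)$ and the $L(s,sym^{2l}f)$ converge for $\Re(s)>1$.

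Finally I would control $\widetilde{H}_{j}(s):=\widetilde{F}_{j}(s)/\widetilde{G}_{j}(s)$ factor by factor, as in Lemma 2.3. Writing $A$ and $B$ for the tails of the local Euler factors of $\widetilde{F}_{j}$ and $\widetilde{G}_{j}$ at $p$, the estimate $\widetilde{b}(n)\ll_{\epsilon}n^{2+\epsilon}$ (which follows from $v(n)\ll_{\epsilon}n^{2+\epsilon}$ and $\lambda^{2}_{sym^{j}f}(n)\ll_{\epsilon}n^{\epsilon}$) yields $|B|<1$ in $\Re(s)\ge 3+2\epsilon$, so $(1+A)/(1+B)$ is well defined. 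Since the $p^{-s}$-coefficients of $\widetilde{F}_{j}$ and $\widetilde{G}_{j}$ coincide by construction (both equal $\widetilde{b}(p)$), the expansion begins
$$\frac{1+A}{1+B}=1+\frac{\lambda^{2}_{sym^{j}f}(p^{2})v(p^{2})-\widetilde{b}(p^{2})}{p^{2s}}+\cdots,$$
with discrepancy coefficients again of size $\ll_{\epsilon}n^{2+\epsilon}$. The product over all primes is then dominated by $\sum_{p}p^{4+2\epsilon}/p^{2\sigma}$, which converges for $\sigma>\tfrac{5}{2}$; this is exactly the step that forces the half-plane $\Re(s)>\tfrac{5}{2}$, and it is the main point to check carefully. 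Uniform absolute convergence there gives $\widetilde{H}_{j}(s)\ll_{\epsilon}1$, and since every local factor is nonvanishing and the product converges, $\widetilde{H}_{j}(s)\ne 0$ on $\Re(s)=3$, which completes the proof.
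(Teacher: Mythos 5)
Your proposal is correct and follows essentially the same route as the paper's own proof: multiplicativity, computation of the local coefficient $\lambda^{2}_{sym^{j}f}(p)v(p)$ via (2.6)--(2.7), identification of the four groups of terms with the Euler factors of $\zeta(s)$, $L(s-2,\chi)$, $L(s,sym^{2l}f)$ and $L(s-2,sym^{2l}f\otimes\chi)$, and the $(1+A)/(1+B)$ expansion whose discrepancy starts at $p^{-2s}$ with coefficients $\ll_{\epsilon}p^{4+\epsilon}$, forcing convergence in $\Re(s)>\tfrac{5}{2}$. Your explicit justification of the abscissa $\tfrac{5}{2}$ via $\sum_{p}p^{4+2\epsilon-2\sigma}$ is a slightly more detailed version of the step the paper leaves implicit.
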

\begin{proof}
	We observe that $\lambda^{2}_{sym^{j}f}(n)v(n)$ is multiplicative, and hence
	\begin{equation}
		\widetilde{F}_{2}(s)=\prod_{p}^{} \left(1+\frac{\lambda^{2}_{sym^{j}f}(p)v(p)}{p^{s}}+\cdots+\frac{\lambda^{2}_{sym^{j}f}(p^{m})v(p^{m})}{p^{ms}}+\cdots\right).
	\end{equation}
	
	Using (2.6) and (2.7), we note that,
	\begin{align*}
		\lambda^{2}_{sym^{2}f}(p)v(p) & = \lambda_{f}^{2}(p^{j})\left(1+p^{2}\chi(p)\right) \\
		&=\left(1+\mathlarger{\mathlarger{\sum}}_{l=1}^{j}\lambda_{f}(p^{2l})\right)\left(1+p^{2}\chi(p)\right) \\
		&=\left(1+\mathlarger{\mathlarger{\sum}}_{l=1}^{j}\lambda_{sym^{2l}f}(p)\right)\left(1+p^{2}\chi(p)\right) \\
		& =1+p^{2}\chi(p)+\mathlarger{\mathlarger{\sum}}_{l=1}^{j}\lambda_{sym^{2l}f}(p)+\mathlarger{\mathlarger{\sum}}_{l=1}^{j}\lambda_{sym^{2l}f}(p)p^{2}\chi(p) \\
		& =:h(p) .             &&(\mbox{say})
	\end{align*} 
	
	From the structure of $h(p)$, we define the coefficients $h(n)$ as
	
	\begin{align*}
		\mathlarger{\mathlarger{\sum}}_{n=1}^{\infty}\frac{h(n)}{n^{s}}=&\zeta(s)L(s-2,\chi)\prod\limits_{n=1}^{j}L(s,sym^{2n}f)L(s-2,sym^{2n}f\otimes\chi),
	\end{align*}
	which is absolutely convergent in $\Re(s)>3$. We also note that, 
	\begin{align*}
		&\prod_{p}^{} \left(1+\frac{h(p)}{p^{s}}+\cdots+\frac{h(p^{m})}{p^{ms}}+\cdots\right) \\
		&=\zeta(s)L(s-2,\chi)\prod\limits_{n=1}^{j}L(s,sym^{2n}f)L(s-2,sym^{2n}f\otimes\chi) \\
		&=\widetilde{G}_{2}(s) , \hspace{10cm} \text{(say)}
	\end{align*}
	for $\Re(s)>3$. Observe that $h(n)\ll_{\epsilon}n^{2+\epsilon}$ for any small positive constant $\epsilon$. \\\\
	
	Now, we note that
	\begin{align*}
		\Bigg|\frac{h(p)}{p^{s}}+\frac{h(p^{2})}{p^{2s}}+\cdots+\frac{h(p^{m})}{p^{ms}}+\cdots\Bigg|&\ll\mathlarger{\mathlarger{\sum}}_{m=1}^{\infty}\frac{p^{(2+\epsilon)m}}{p^{m\sigma}} \\
		&<1. &&(\mbox{in $\Re(s)\geq 3+2\epsilon$})
	\end{align*}
	
	Let us write,
	$$\widetilde{A}=\frac{\lambda^{2}_{sym^{j}f}(p)v(p)}{p^{s}}+\cdots+\frac{\lambda^{2}_{sym^{j}f}(p^{m})v(p^{m})}{p^{ms}}+\cdots,$$ and 
	$$\widetilde{B}=\frac{h(p)}{p^{s}}+\cdots+\frac{h(p^{m})}{p^{ms}}+\cdots.$$
	From the above calculations, we observe that $|\widetilde{B}|<1$ in $\Re(s)\geq 3+2\epsilon$. \\\\
	
	Note that,
	\begin{align*}
		\frac{1+\widetilde{A}}{1+\widetilde{B}}&=(1+\widetilde{A})(1-\widetilde{B}+\widetilde{B}^{2}-\widetilde{B}^{3}+\cdots) &&(\mbox{in $\Re(s)\geq 3+2\epsilon$}) \\
		&=1+\widetilde{A}-\widetilde{B}-\widetilde{A}\widetilde{B}+ \mbox{higher terms} \\
		&=1+\frac{\lambda^{2}_{sym^{j}f}(p^{2})v(p^{2})-h(p^{2})}{p^{2s}}+\cdots+\frac{\widetilde{c}_{m}(p^{m})}{p^{ms}}+\cdots,
	\end{align*}
	with $\widetilde{c}_{m}(n)\ll_{\epsilon}n^{2+\epsilon}$. So,
	\begin{align*}
		\prod_{p}^{}\left(\frac{1+\widetilde{A}}{1+\widetilde{B}}\right)&=	\prod_{p}^{}\left(1+\frac{\lambda^{2}_{sym^{j}f}(p^{2})v(p^{2})-h(p^{2})}{p^{2s}}+\cdots+\frac{\widetilde{c}_{m}(p^{m})}{p^{ms}}+\cdots\right) \\
		&\ll_{\epsilon} 1. \hspace{6cm} (\mbox{in $\Re(s)>\frac{5}{2}$})
	\end{align*}
	
	Thus,
	\begin{align*}
		\widetilde{H}_{j}(s)&:=\frac{\widetilde{F}_{j}(s)}{\widetilde{G}_{j}(s)} \\
		&=\prod_{p}^{}\left(\frac{1+\widetilde{A}}{1+\widetilde{B}}\right) \\
		&\ll_{\epsilon} 1, &&(\mbox{in $\Re(s)>\frac{5}{2}$})
	\end{align*} 
	and also $\widetilde{H}_{j}(s)\neq0$ on $\Re(s)=3$.
\end{proof}

\begin{lemma}
	For any $\epsilon>0$, we have
	\begin{eqnarray}
		\int_{1}^{T} \left|L \left( \dfrac{1}{2}+it,f \right) \right|^{6}dt \ll T^{2+\epsilon} ,
	\end{eqnarray}
	uniformly for $T\geq1 $, and
	\begin{eqnarray}
		L(\sigma+it) \ll_{\epsilon} (1+|t|)^{\frac{1}{3}(1+\epsilon-\sigma)+\epsilon},
	\end{eqnarray} 
	uniformly for $\frac{1}{2}\leq \sigma\leq 1+\epsilon$, and $|t|\geq t_{0}$ (where $t_0$ is sufficiently large).
\end{lemma}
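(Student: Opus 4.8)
The plan is to prove the two estimates by completely different methods, because they are statements of different type: the second is a pointwise bound, while the first is an integrated (sixth-moment) bound. The exponent in the second estimate ($1/6$ at $\sigma=\tfrac12$, decaying linearly to $0$ at $\sigma=1+\epsilon$) identifies it as the classical Weyl subconvexity bound for a \emph{degree-one} $L$-function, so I read it as the bound needed for the factors $\zeta$ and $L(s,\chi)$ occurring in $G_{j}$ and $\widetilde{G}_{j}$; the first is the sixth moment of the \emph{degree-two} $L$-function $L(s,f)$. Both are standard analytic inputs which I would ultimately cite, but each admits a short derivation.

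For the subconvexity bound I would first record the Weyl estimate on the critical line. The approximate functional equation expresses the central value of a degree-one $L$-function through an exponential sum of shape $\sum_{n\le\sqrt{t}}n^{-1/2-it}$; van der Corput's method (Weyl differencing) then gives $\zeta(\tfrac12+it)\ll t^{1/6+\epsilon}$, and the identical argument applies to $L(\tfrac12+it,\chi)$ since $\chi$ has fixed modulus. On the line $\sigma=1+\epsilon$ the defining series converges absolutely, so the function is $\ll 1$. Since the growth exponent $\mu(\sigma)$ is convex (Phragm\'en--Lindel\"of) with $\mu(\tfrac12)\le\tfrac16$ and $\mu(1+\epsilon)=0$, convexity forces $\mu(\sigma)\le\tfrac13(1+\epsilon-\sigma)$ on the whole interval $\tfrac12\le\sigma\le1+\epsilon$, which is the claim.

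For the sixth moment I would not evaluate the integral directly but factor and interpolate. Writing $|L|^{6}=|L|^{2}\cdot|L|^{4}$ and extracting the supremum,
\[
\int_{1}^{T}\bigl|L(\tfrac12+it,f)\bigr|^{6}\,dt\ \le\ \Bigl(\sup_{1\le t\le T}\bigl|L(\tfrac12+it,f)\bigr|^{2}\Bigr)\int_{1}^{T}\bigl|L(\tfrac12+it,f)\bigr|^{4}\,dt .
\]
The analytic conductor of $L(s,f)$ is $\asymp t^{2}$, so the convexity bound gives the supremum $\ll T^{1+\epsilon}$; combined with the fourth-moment estimate $\int_{1}^{T}|L(\tfrac12+it,f)|^{4}\,dt\ll T^{1+\epsilon}$ this yields exactly $T^{2+\epsilon}$.

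The fourth-moment bound is the one genuinely deep ingredient and the main obstacle. It is the $GL(2)$ analogue of Ingham's fourth-moment theorem for $\zeta$ and is proved by spectral (Motohashi-type) machinery for $GL(2)\times GL(2)$, or by the approximate functional equation for $L(s,f)^{2}$ together with a delicate off-diagonal analysis; the naive mean-value (large-sieve) estimate applied to the length-$t^{3}$ Dirichlet polynomial representing $L(s,f)^{3}$ only delivers the far weaker $T^{3+\epsilon}$, so genuine cancellation beyond the diagonal is unavoidable. Once the fourth moment and the Weyl estimate are granted, the remaining steps — the approximate functional equations, the exponent-pair (van der Corput) estimates and the Phragm\'en--Lindel\"of interpolation — are entirely routine, and both parts of the lemma follow.
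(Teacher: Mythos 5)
Your treatment of the pointwise bound (2.11) is reasonable: reading the exponent $\tfrac13(1+\epsilon-\sigma)$ (which equals $\tfrac16+O(\epsilon)$ at $\sigma=\tfrac12$) as the Weyl bound for a degree-one $L$-function, establishing it on the critical line by van der Corput, noting boundedness at $\sigma=1+\epsilon$, and interpolating by Phragm\'en--Lindel\"of is exactly the content of the phrase ``maximum-modulus principle in a suitable rectangle'' in the paper, which otherwise just points to Good's paper. (Be aware that the paper's own statement is ambiguous here: the cited result of Good concerns the degree-two $L(s,f)$ and gives exponent $\tfrac13$ at $\sigma=\tfrac12$, i.e.\ $\tfrac23(1+\epsilon-\sigma)$, whereas the exponent actually written and actually used in Section 3 is the degree-one one applied to $L(s,\chi)$; your reading matches the usage.)

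The proof of the sixth-moment bound (2.10), however, has a genuine gap. You reduce it to $\sup_{t\le T}|L(\tfrac12+it,f)|^{2}\ll T^{1+\epsilon}$ times $\int_{1}^{T}|L(\tfrac12+it,f)|^{4}\,dt\ll T^{1+\epsilon}$, and the second input is not a theorem: the Lindel\"of-on-average fourth moment of a fixed $GL(2)$ $L$-function in the $t$-aspect is a well-known open problem. It is not the ``$GL(2)$ analogue of Ingham's theorem'' in the sense of being accessible to Motohashi-type spectral methods --- those methods handle $\int|\zeta|^{4}$ (a second moment of a degree-two object) and $\int|L(\tfrac12+it,f)|^{2}$, whereas $\int|L(\tfrac12+it,f)|^{4}$ is a second moment of a degree-four object, on par with the eighth moment of $\zeta$. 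Moreover, no choice of H\"older splitting rescues the argument from known inputs: using Good's subconvexity $L(\tfrac12+it,f)\ll t^{1/3+\epsilon}$ together with the known second moment $\ll T^{1+\epsilon}$, one gets only $\int|L|^{6}\le(\sup|L|^{4})\int|L|^{2}\ll T^{7/3+\epsilon}$, and every such factorization caps out at $T^{7/3}$. The bound $T^{2+\epsilon}$ is a genuinely deeper result, obtained in Jutila's Tata lectures via transformation formulas and large-value estimates for Dirichlet polynomials with cusp-form coefficients; the paper simply cites it, and your argument cannot reproduce it along the proposed lines.
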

\begin{proof}
	Proof of (2.10) is given by Jutila, for instance, see \cite{jutila1987} and using maximum-modulus principle in a suitable rectangle, we get (2.11), for instance, see \cite{good1982square}.
\end{proof}

\begin{lemma}
	For any $\epsilon>0$, we have
	\begin{eqnarray}
		\int_{1}^{T} \left|\zeta \left( \dfrac{1}{2}+it \right) \right|^{12}dt \ll T^{2+\epsilon} ,
	\end{eqnarray}
	uniformly for $T\geq1 $, and
	
	\begin{eqnarray}
		\zeta(\sigma+it) \ll_{\epsilon} (1+|t|)^{\max\{\frac{13}{42}(1-\sigma),0\}+\epsilon},
	\end{eqnarray} 
	uniformly for $\frac{1}{2}\leq\sigma\leq1+\epsilon$, and $|t|\geq1$.
\end{lemma}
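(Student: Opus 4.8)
The two assertions are standard analytic facts about the Riemann zeta function, so my plan mirrors exactly the strategy used to prove Lemma 2.4: import the sharp moment estimate from the literature, and then deduce the pointwise growth bound in the strip by a convexity argument. No new machinery is needed; the content lies entirely in citing the correct deep inputs and doing the routine interpolation correctly.

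For the twelfth moment bound (2.12), I would simply invoke the celebrated estimate of Heath-Brown, who established $\int_{0}^{T}\left|\zeta\left(\tfrac12+it\right)\right|^{12}\,dt \ll T^{2+\epsilon}$. Restricting the range of integration from $[0,T]$ to $[1,T]$ only decreases the left-hand side, so (2.12) follows at once, uniformly for $T\geq 1$. For the pointwise bound (2.13), I would argue by the Phragm\'en--Lindel\"of convexity principle applied across the strip $\tfrac12\le\sigma\le 1+\epsilon$. On the critical line the current record subconvexity estimate of Bourgain supplies $\zeta\left(\tfrac12+it\right)\ll_{\epsilon}(1+|t|)^{13/84+\epsilon}$, while on the line $\sigma=1+\epsilon$ the Dirichlet series converges absolutely and gives $\zeta(1+\epsilon+it)\ll_{\epsilon}1$. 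To apply the maximum-modulus principle cleanly I would first multiply by $(s-1)$ to obtain the entire function $g(s)=(s-1)\zeta(s)$, whose order is finite in the strip; since $|s-1|\asymp(1+|t|)$ for $|t|\ge 1$, the two boundary bounds become $|g(\tfrac12+it)|\ll_{\epsilon}(1+|t|)^{1+13/84+\epsilon}$ and $|g(1+\epsilon+it)|\ll_{\epsilon}(1+|t|)$. Interpolating linearly for $g$ and then dividing back by $(s-1)$ yields an exponent for $\zeta(\sigma+it)$ that is the linear function of $\sigma$ equal to $13/84$ at $\sigma=\tfrac12$ and $0$ at $\sigma=1$, namely $\tfrac{13}{42}(1-\sigma)$. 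For $\sigma\ge 1$ the trivial absolute-convergence bound already gives $O_{\epsilon}(1)$, which accounts for the $\max\{\,\cdot\,,0\}$ appearing in the stated exponent.

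The genuinely hard content—the twelfth power moment of Heath-Brown and the critical-line subconvexity exponent $13/84$ of Bourgain—is imported wholesale from those works, so the only step I actually carry out is the convexity interpolation. The one point requiring a little care is the pole of $\zeta$ at $s=1$ lying inside the strip: I handle it by passing to $(s-1)\zeta(s)$ before applying Phragm\'en--Lindel\"of, which is harmless since the bound (2.13) is only asserted for $|t|\ge 1$, where the factor $|s-1|$ contributes a clean $(1+|t|)^{1}$ to the bookkeeping and the $\epsilon$-loss from measuring at $\sigma=1+\epsilon$ rather than $\sigma=1$ is absorbed into the final $+\epsilon$.
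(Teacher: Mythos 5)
Your proposal is correct and follows essentially the same route as the paper, which simply cites Heath-Brown for the twelfth moment and Bourgain for the subconvexity bound. The only difference is that you additionally spell out the routine Phragm\'en--Lindel\"of interpolation (with the standard $(s-1)\zeta(s)$ device to handle the pole) that turns Bourgain's critical-line exponent $13/84$ into the strip bound $\frac{13}{42}(1-\sigma)$, a step the paper leaves implicit.
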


\begin{proof}
	For the proof of (2.12), see \cite{heath-brown-12th} and (2.13) is due to Bourgain, for instance, see \cite{bourgain-decoupling}.
\end{proof}

\begin{lemma}
	Let $\chi$ be a primitive character modulo $q$ and $\mathfrak{L}^{d}_{m,n}(s,\chi)$  be a general $L$-function of degree $2A$. For any $\epsilon>0$, we have 
	\begin{eqnarray}
		\int_{T}^{2T} \left|\mathfrak{L}^{d}_{m,n}(\sigma+it,\chi) \right|^{2}dt \ll (qT)^{2A(1-\sigma)+\epsilon} ,
	\end{eqnarray}
	uniformly for $\frac{1}{2}\leq\sigma\leq1+\epsilon$, and $T\geq1$. Also,
	\begin{eqnarray}
		\mathfrak{L}^{d}_{m,n}(\sigma+it,\chi)  \ll \left(q(1+|t|)\right)^{\max\{A(1-\sigma),0\}+\epsilon},  
	\end{eqnarray}
	uniformly for $-\epsilon\leq\sigma\leq1+\epsilon$.
\end{lemma}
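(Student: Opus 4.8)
The plan is to treat $\mathfrak{L}^{d}_{m,n}(s,\chi)$ as a degree-$2A$ $L$-function and to deduce both estimates from its analytic continuation, Euler product and functional equation, exactly as one proves the classical hybrid mean-value and convexity bounds for $\zeta$ and Dirichlet $L$-functions. I would write the completed function as $\Lambda(s)=\mathfrak{q}^{s/2}\gamma(s)\,\mathfrak{L}^{d}_{m,n}(s,\chi)$, where $\gamma(s)$ is a product of $2A$ Gamma factors and the analytic conductor at height $t$ satisfies $\mathfrak{q}(t)\asymp q(1+|t|)^{2A}$; the functional equation then reads $\Lambda(s)=\varepsilon\,\overline{\Lambda(1-\bar s)}$ for a root number $\varepsilon$ of modulus one. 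These properties are available because each factor of $\mathfrak{L}^{d}_{m,n}$ is built from $\zeta(s)$, $L(s,\chi)$ and the (twisted) symmetric-power $L$-functions $L(s,\mathrm{sym}^{2n}f)$ and $L(s,\mathrm{sym}^{2n}f\otimes\chi)$, all of which are now known to be automorphic by the work of Newton and Thorne, so they carry the expected functional equations and the Ramanujan bound on average.

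For the pointwise bound (2.15) I would argue by the Phragm\'en--Lindel\"of convexity principle in the strip $-\epsilon\le\sigma\le 1+\epsilon$. On the line $\sigma=1+\epsilon$ the Dirichlet series converges absolutely, so $\mathfrak{L}^{d}_{m,n}(s,\chi)\ll 1\ll (q(1+|t|))^{\epsilon}$, which matches the target since $\max\{A(1-\sigma),0\}=0$ there. On the line $\sigma=-\epsilon$ the reflected factor $\mathfrak{L}^{d}_{m,n}(1-s,\chi)$ is bounded, and Stirling's formula applied to the ratio $\gamma(1-s)/\gamma(s)$ together with the conductor factor $\mathfrak{q}^{(1-2\sigma)/2}$ gives $\ll (q(1+|t|))^{A(1+\epsilon)+\epsilon}$. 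As $\max\{A(1-\sigma),0\}$ is precisely the convex majorant interpolating these two boundary exponents, Phragm\'en--Lindel\"of delivers (2.15) uniformly in the whole strip.

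For the second-moment bound (2.14) I would insert the approximate functional equation, representing $\mathfrak{L}^{d}_{m,n}(\tfrac12+it,\chi)$ as a Dirichlet polynomial $\sum_{n\le Y}a_n n^{-1/2-it}$ of length $Y\asymp\sqrt{\mathfrak{q}(t)}\asymp q^{1/2}(1+|t|)^{A}$, plus its dual and a negligible tail. The mean value theorem for Dirichlet polynomials, $\int_T^{2T}\big|\sum_{n\le N}b_n n^{-it}\big|^2\,dt\ll (T+N)\sum_{n\le N}|b_n|^2$, combined with the average Ramanujan estimate $\sum_{n\le N}|a_n|^2\ll N^{1+\epsilon}$ (so that $\sum_{n\le Y}|a_n|^2/n\ll (qT)^{\epsilon}$), yields $\int_T^{2T}|\mathfrak{L}^{d}_{m,n}(\tfrac12+it,\chi)|^2\,dt\ll (T+q^{1/2}T^{A})(qT)^{\epsilon}\ll(qT)^{A+\epsilon}$, which is (2.14) at $\sigma=\tfrac12$. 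The right endpoint $\sigma=1+\epsilon$ comes from absolute convergence, and since $\sigma\mapsto\int_T^{2T}|\mathfrak{L}^{d}_{m,n}(\sigma+it,\chi)|^2\,dt$ is logarithmically convex (a mean-value Phragm\'en--Lindel\"of argument via H\"older's inequality on an auxiliary analytic function), interpolating the $\sigma$-dependence produces the exponent $2A(1-\sigma)$ throughout $\tfrac12\le\sigma\le 1+\epsilon$.

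The main obstacle is securing uniformity in $q$ and $T$ \emph{simultaneously} across these steps. The Stirling analysis of $\gamma(1-s)/\gamma(s)$ and the truncation length in the approximate functional equation must be controlled with explicit dependence on $\mathfrak{q}(t)\asymp q(1+|t|)^{2A}$, and the coefficient estimate $\sum_{n\le N}|a_n|^2\ll N^{1+\epsilon}$---which rests on the Rankin--Selberg theory for the constituent symmetric-power $L$-functions---is exactly what forces the clean $(qT)^{\epsilon}$ savings rather than a power loss. Once these uniform inputs are in hand the convexity interpolation is routine; the genuinely substantive point is that the factors of $\mathfrak{L}^{d}_{m,n}$ are automorphic, so that the functional equation and the average Ramanujan bound are legitimately available.
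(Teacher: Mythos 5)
Your outline is sound, but you should know that the paper does not prove this lemma at all: its entire ``proof'' is a citation to Jiang and L\"u \cite{jiang-lu}, where such hybrid mean-value and convexity bounds are established for a general class of $L$-functions satisfying the usual axioms (Dirichlet series with an Euler product, functional equation with analytic conductor $\asymp q(1+|t|)^{2A}$, and a Rankin--Selberg/on-average Ramanujan bound for the coefficients). What you have written is essentially the standard argument underlying that reference: Phragm\'en--Lindel\"of between $\sigma=1+\epsilon$ (absolute convergence) and $\sigma=-\epsilon$ (functional equation plus Stirling) for the pointwise bound, and approximate functional equation plus the mean value theorem for Dirichlet polynomials, followed by Gabriel-type convexity in $\sigma$, for the second moment. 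Your endpoint bookkeeping checks out: at $\sigma=\tfrac12$ the length $Y\asymp q^{1/2}T^{A}$ gives $(qT)^{A+\epsilon}$, matching $2A(1-\sigma)$ there, and the coefficient estimate $\sum_{n\le N}|a_n|^2\ll N^{1+\epsilon}$ for the convolved coefficients follows from the individual Rankin--Selberg bounds via Cauchy--Schwarz and a divisor-function loss. Two small caveats: the appeal to Newton--Thorne is needed only to place the symmetric-power factors in the axiomatic class (the lemma as stated is about an abstract degree-$2A$ $L$-function, so automorphy is an input to its \emph{application} in Lemmas 2.2 and 2.3 rather than to its proof), and your argument is a proof architecture rather than a complete proof --- the uniformity issues you flag (Stirling ratios and truncation lengths controlled explicitly in $q(1+|t|)^{2A}$) are exactly the points one must execute carefully, as is done in the cited source.
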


\begin{proof}
	For the proof of (2.14) and (2.15), see \cite{jiang-lu}.
	
\end{proof}

\section{Proof of Theorem 1.1}	

From (2.2), we can write
$$\mathlarger{\mathlarger{\sum}}_{n\leq{x}}\lambda^{2}_{sym^{j}f}(n)r_{6}(n)=\mathlarger{\mathlarger{\sum}}_{n\leq{x}}\lambda^{2}_{sym^{j}f}(n)l_{1}(n)-\mathlarger{\mathlarger{\sum}}_{n\leq{x}}\lambda^{2}_{sym^{j}f}(n)v_{1}(n).$$

Firstly, we consider the sum $\mathlarger{\mathlarger{\sum}}_{n\leq{x}}\lambda^{2}_{sym^{j}f}(n)l_{1}(n)$. We begin by applying the Perron's formula (see chapter 2.4 of \cite{granville2014multiplicative}) to $F_{j}(s)$ with $\eta=3+\epsilon$, and\\ $10\leq T\leq x$. Thus we have,
\begin{align*}
	\mathlarger{\mathlarger{\sum}}_{n\leq{x}}\lambda^{2}_{sym^{j}f}(n)l_{1}(n)  & =16\mathlarger{\mathlarger{\sum}}_{n\leq{x}}\lambda^{2}_{sym^{j}f}(n)l(n) \\
	& =\frac{16}{2\pi i}\int_{\eta-iT}^{\eta+iT}F_{j}(s)\frac{x^s}{s}ds + O\left(\frac{x^{3+3\epsilon}}{T}\right).
\end{align*}
We move the line of integration to $\Re(s)=\frac{5}{2}+\epsilon$, and by Cauchy's residue theorem there is only one simple pole at $s=3$, coming from the factor $\zeta(s-2)$. This contributes a residue, which is $c(j)x^{3}$, where $c(j)$ is an effective constant depending on the values of various $L$-functions appearing in $G_{j}(s)$ at $s=3$.  \\

More precisely,
\begin{align*}
	c(j) &=16\lim_{s \to 3} (s-3)\frac{F_{j}(s)}{s} \\
	&= \frac{16}{3}L(3,\chi)\prod\limits_{n=1}^{j}L(1,sym^{2n}f)L(3,sym^{2n}f\otimes\chi)H_{j}(3).
\end{align*}
So, we obtain
\begin{align*}
	\mathlarger{\mathlarger{\sum}}_{n\leq{x}}\lambda^{2}_{sym^{j}f}(n)l_{1}(n) & = c(j)x^{3}+\frac{16}{2\pi i}\left\{\int_{\frac{5}{2}+\epsilon-iT}^{\frac{5}{2}+\epsilon+iT}+\int_{3+\epsilon-iT}^{\frac{5}{2}+\epsilon-iT}+\int_{\frac{5}{2}+\epsilon+iT}^{3+\epsilon+iT}\right\}F_{j}(s)\frac{x^{s}}{s}ds\\ & \quad +O\left(\frac{x^{3+3\epsilon}}{T}\right) \\
	& =c(j)x^{3} + \frac{16}{2\pi i}(J_{1}+J_{2}+J_{3}) +O\left(\frac{x^{3+3\epsilon}}{T}\right). &&(\mbox{say})
\end{align*}

Contribution of horizontal line integrals ($J_{2}$ and $J_{3}$) in absolute value (using Lemmas 2.2, 2.5 and 2.6) is
\begin{align*}
	& \ll \int_{\frac{5}{2}+\epsilon}^{3+\epsilon}\frac{|\zeta(\sigma-2+iT)\prod\limits_{n=1}^{j}L(\sigma-2+iT,sym^{2n}f)|}{T}x^{\sigma}d\sigma \\\\
	& \ll \int_{\frac{1}{2}+\epsilon}^{1+\epsilon}\frac{|\zeta(\sigma+iT)\prod\limits_{n=1}^{j}L(\sigma+iT,sym^{2n}f)|}{T}x^{\sigma+2}d\sigma \\\\
	& \ll \left(\frac{x^{2}}{T}\right)\underset{\frac{1}{2}+\epsilon\leq{\sigma}\leq{1+\epsilon}}{\max}\ \ x^\sigma T^{\left(\frac{(j+1)^{2}}{2}-\frac{4}{21}\right)(1-\sigma)+\epsilon}  \\
	& \ll \left(\frac{x^{2+2\epsilon}}{T}\right)\underset{\frac{1}{2}+\epsilon\leq{\sigma}\leq{1+\epsilon}}{\max}\left( \frac{x}{T^{\left(\frac{(j+1)^{2}}{2}-\frac{4}{21}\right)}}\right)^\sigma T^{\left(\frac{(j+1)^{2}}{2}-\frac{4}{21}\right)} .
\end{align*}

Clearly, $\left( \dfrac{x}{T^{\left(\frac{(j+1)^{2}}{2}-\frac{4}{21}\right)}}\right)^\sigma$ is monotonic as a function of $\sigma$ for $\frac{1}{2}+\epsilon\leq{\sigma}\leq{1+\epsilon}$, and hence the maximum is attained at the extremities of the interval $[\frac{1}{2}+\epsilon,1+\epsilon]$. Thus, \\
$\ll x^{2+2\epsilon}\left(x^{\frac{1}{2}+\epsilon}T^{\left(\frac{(j+1)^{2}}{4}-\frac{2}{21}-1\right)}+\frac{x^{1+\epsilon}}{T}\right)$ \\\\ 
$\ll x^{\frac{5}{2}+3\epsilon}T^{\left(\frac{(j+1)^{2}}{4}-\frac{23}{21}+\epsilon\right)} +\frac{x^{3+3\epsilon}}{T} $. \\\\

Contribution of the left vertical line integral ($J_{1}$) in absolute value (using Lemmas 2.2, 2.5, 2.6 and H\"{o}lder's inequality) is \\
\begin{align*}
	& \ll \int_{\frac{5}{2}+\epsilon-iT}^{\frac{5}{2}+\epsilon+iT}\frac{\left|\zeta(\frac{1}{2}+\epsilon+it)\prod\limits_{n=1}^{j}L(\frac{1}{2}+\epsilon+it,sym^{2n}f)\right|}{\left|\frac{5}{2}+\epsilon+it\right|}x^{\frac{5}{2}+\epsilon}dt \\\\
	& \ll x^{\frac{5}{2}+\epsilon}+x^{\frac{5}{2}+\epsilon}\frac{1}{T}\left(\int_{10\leq|t|\leq T}^{} \left|\zeta(\frac{1}{2}+\epsilon+it)\right|^{12}dt\right)^\frac{1}{12}\\
	& \quad \times \left(\int_{10\leq|t|\leq T}^{}\left|L(\frac{1}{2}+\epsilon+it,sym^{2}f)\right|^2dt\right)^\frac{1}{2} \\\\
	& \quad \times \left\{ \underset{10\leq{|t|}\leq{T}}{\max}\left|\prod\limits_{n=2}^{j}L(\frac{1}{2}+\epsilon+it,sym^{2n}f)\right|^{\frac{2}{5}}\left(\int_{10\leq|t|\leq T}^{}\left|\prod\limits_{n=2}^{j}L(\frac{1}{2}+\epsilon+it,sym^{2n}f)\right|^2dt\right)\right\}^\frac{5}{12} \\\\
	& \ll x^{\frac{5}{2}+\epsilon}+x^{\frac{5}{2}+\epsilon}\left(T^{-1+2.\frac{1}{12}+3.\frac{1}{2}.\frac{1}{2}+\left((j+1)^{2}-4\right)\left(\frac{1}{2}.\frac{1}{2}.\frac{2}{5}.\frac{5}{12}\right)+\left((j+1)^{2}-4\right)\left(\frac{1}{2}.\frac{5}{12}\right)}\right) \\\\
	& \ll x^{\frac{5}{2}+\epsilon}T^{\left(\frac{(j+1)^{2}}{4}-\frac{13}{12}+\epsilon\right)} .
\end{align*}

Note that $10\leq T\leq x$. Thus, we obtain
$$\mathlarger{\mathlarger{\sum}}_{n\leq{x}}\lambda^{2}_{sym^{j}f}(n)l_{1}(n) = c(j)x^{3}+O(x^{\frac{5}{2}+\epsilon}T^{\left(\frac{(j+1)^{2}}{4}-\frac{13}{12}+\epsilon\right)})+O\left(\frac{x^{3+3\epsilon}}{T}\right). $$
We choose $T$ such that $x^{\frac{5}{2}}T^{\left(\frac{(j+1)^{2}}{4}-\frac{13}{12}\right)}\asymp \frac{x^{3}}{T} $ i.e., $T^{\left(\frac{3(j+1)^{2}-1}{12}\right)}\asymp x^{\frac{1}{2}}$.\\\\ Therefore, $T\asymp x^{\frac{6}{3(j+1)^{2}-1}}$. \\\\

Thus, we get
\begin{eqnarray}
	\mathlarger{\mathlarger{\sum}}_{n\leq{x}}\lambda^{2}_{sym^{j}f}(n)l_{1}(n)=c(j)x^{3}+O\left(x^{3-\frac{6}{3(j+1)^{2}-1}+3\epsilon}\right).
\end{eqnarray}

Similarly, we apply the Perron's formula (see chapter 2.4 of \cite{granville2014multiplicative}) to $\widetilde{F}_{j}(s)$ with $\eta=3+\epsilon$, and $10\leq T\leq x$. Thus, we have
\begin{align*}
	\mathlarger{\mathlarger{\sum}}_{n\leq{x}}\lambda^{2}_{sym^{j}f}(n)v_{1}(n)  & =4\mathlarger{\mathlarger{\sum}}_{n\leq{x}}\lambda^{2}_{sym^{j}f}(n)v(n) \\
	& =\frac{4}{2\pi i}\int_{\eta-iT}^{\eta+iT}\widetilde{F}_{j}(s)\frac{x^s}{s}ds + O\left(\frac{x^{3+3\epsilon}}{T}\right).
\end{align*}
We move the line of integration to $\Re(s)=\frac{5}{2}+\epsilon$. Note that, there is no singularity in the rectangle obtained, and the function $\widetilde{F}_{j}(s)\dfrac{x^s}{s}$ is analytic in this region. Thus, using Cauchy's theorem for rectangle pertaining to analytic functions, we get
\begin{align*}
	\mathlarger{\mathlarger{\sum}}_{n\leq{x}}\lambda^{2}_{sym^{j}f}(n)v_{1}(n) & = \frac{4}{2\pi i}\left\{\int_{\frac{5}{2}+\epsilon-iT}^{\frac{5}{2}+\epsilon+iT}+\int_{3+\epsilon-iT}^{\frac{5}{2}+\epsilon-iT}+\int_{\frac{5}{2}+\epsilon+iT}^{3+\epsilon+iT}\right\}\widetilde{F}_{j}(s)\frac{x^{s}}{s}ds\\ & \quad +O\left(\frac{x^{3+3\epsilon}}{T}\right) \\
	& =\frac{4}{2\pi i}(J_{1}+J_{2}+J_{3}) +O\left(\frac{x^{3+3\epsilon}}{T}\right). &&(\mbox{say})
\end{align*}

Contribution of horizontal line integrals ($J_{2}$ and $J_{3}$) in absolute value (using Lemmas 2.3, 2.4 and 2.6 ) is
\begin{align*}
	& \ll \int_{\frac{5}{2}+\epsilon}^{3+\epsilon}\frac{|L(\sigma-2+iT)\prod\limits_{n=1}^{j}L(\sigma-2+iT,sym^{2n}f\otimes\chi)|}{T}x^{\sigma}d\sigma \\\\
	& \ll \int_{\frac{1}{2}+\epsilon}^{1+\epsilon}\frac{|L(\sigma+iT)\prod\limits_{n=1}^{j}L(\sigma+iT,sym^{2n}f\otimes\chi)|}{T}x^{\sigma+2}d\sigma \\\\
	& \ll \left(\frac{x^{2}}{T}\right)\underset{\frac{1}{2}+\epsilon\leq{\sigma}\leq{1+\epsilon}}{\max}\ \ x^\sigma T^{\left(\frac{(j+1)^{2}}{2}-\frac{1}{6}\right)(1-\sigma)+\epsilon}  \\
	& \ll \left(\frac{x^{2+2\epsilon}}{T}\right)\underset{\frac{1}{2}+\epsilon\leq{\sigma}\leq{1+\epsilon}}{\max}\left( \frac{x}{T^{\left(\frac{(j+1)^{2}}{2}-\frac{1}{6}\right)}}\right)^\sigma T^{\left(\frac{(j+1)^{2}}{2}-\frac{1}{6}\right)} .
\end{align*}	

Clearly, $\left( \dfrac{x}{T^{\left(\frac{(j+1)^{2}}{2}-\frac{1}{6}\right)}}\right)^\sigma$ is monotonic as a function of $\sigma$ for $\frac{1}{2}+\epsilon\leq{\sigma}\leq{1+\epsilon}$, and hence the maximum is attained at the extremities of the interval $[\frac{1}{2}+\epsilon,1+\epsilon]$. Thus, \\
$\ll x^{2+2\epsilon}\left(x^{\frac{1}{2}+\epsilon}T^{\left(\frac{(j+1)^{2}}{4}-\frac{1}{12}-1+\epsilon\right)}+\frac{x^{1+\epsilon}}{T}\right)$ \\\\ 
$\ll x^{\frac{5}{2}+3\epsilon}T^{\left(\frac{(j+1)^{2}}{4}-\frac{13}{12}+\epsilon\right)} +\frac{x^{3+3\epsilon}}{T} $. \\\\

Contribution of the left vertical line integral ($J_{1}$) in absolute value (using Lemmas 2.3, 2.4, 2.6 and H\"{o}lder's inequality) is \\
\begin{align*}
	& \ll \int_{\frac{5}{2}+\epsilon-iT}^{\frac{5}{2}+\epsilon+iT}\frac{\left|L(\frac{1}{2}+\epsilon+it)\prod\limits_{n=1}^{j}L(\frac{1}{2}+\epsilon+it,sym^{2n}f\otimes\chi)\right|}{\left|\frac{5}{2}+\epsilon+it\right|}x^{\frac{5}{2}+\epsilon}dt \\\\
	& \ll x^{\frac{5}{2}+\epsilon}+x^{\frac{5}{2}+\epsilon}\frac{1}{T}\left(\int_{10\leq|t|\leq T}^{} \left|L(\frac{1}{2}+\epsilon+it)\right|^{6}dt\right)^\frac{1}{6}\\
	& \quad \times \left(\int_{10\leq|t|\leq T}^{}\left|L(\frac{1}{2}+\epsilon+it,sym^{2}f\otimes\chi)\right|^2dt\right)^\frac{1}{2} \\\\
	& \quad \times \left\{ \underset{10\leq{|t|}\leq{T}}{\max}\left|\prod\limits_{n=2}^{j}L(\frac{1}{2}+\epsilon+it,sym^{2n}f\otimes\chi)\right|\left(\int_{10\leq|t|\leq T}^{}\left|\prod\limits_{n=2}^{j}L(\frac{1}{2}+\epsilon+it,sym^{2n}f\otimes\chi)\right|^2dt\right)\right\}^\frac{1}{3} \\\\
	& \ll x^{\frac{5}{2}+\epsilon}+x^{\frac{5}{2}+\epsilon}\left(T^{-1+2.\frac{1}{6}+3.\frac{1}{2}.\frac{1}{2}+\left((j+1)^{2}-4\right)\left(\frac{1}{2}.\frac{1}{2}.\frac{1}{3}\right)+\left((j+1)^{2}-4\right)\left(\frac{1}{2}.\frac{1}{3}\right)}\right) \\\\
	& \ll x^{\frac{5}{2}+\epsilon}T^{\left(\frac{(j+1)^{2}}{4}-\frac{11}{12}\right)} .
\end{align*}

Note that $10\leq T\leq x$. Thus, we obtain
$$\mathlarger{\mathlarger{\sum}}_{n\leq{x}}\lambda^{2}_{sym^{j}f}(n)v_{1}(n) = O\left(x^{\frac{5}{2}+\epsilon}T^{\left(\frac{(j+1)^{2}}{4}-\frac{11}{12}\right)}\right)+O\left(\frac{x^{3+3\epsilon}}{T}\right). $$
We choose $T$ such that $x^{\frac{5}{2}}T^{\left(\frac{(j+1)^{2}}{4}-\frac{11}{12}\right)}\asymp \frac{x^{3}}{T} $ i.e., $T^{\left(\frac{(j+1)^{2}}{4}+\frac{1}{12}\right)}\asymp x^{\frac{1}{2}}$.\\\\ Therefore, $T\asymp x^{\frac{6}{3(j+1)^{2}+1}}$. \\\\

Thus, we get
\begin{eqnarray}
	\mathlarger{\mathlarger{\sum}}_{n\leq{x}}\lambda^{2}_{sym^{j}f}(n)v_{1}(n)=O\left(x^{3-\frac{6}{3(j+1)^{2}+1}+\epsilon}\right).
\end{eqnarray}
Combining (3.1) and (3.2), we get
$$\mathlarger{\mathlarger{\sum}}_{n\leq{x}}\lambda^{2}_{sym^{j}f}(n)r_{6}(n)=c(j)x^{3}+O\left(x^{3-\frac{6}{3(j+1)^{2}+1}+\epsilon}\right) ,$$ where $c(j)$ is an effective constant given by 
$$c(j)=\frac{16}{3}L(3,\chi)\prod\limits_{n=1}^{j}L(1,sym^{2n}f)L(3,sym^{2n}f\otimes\chi)H_{j}(3),$$ and $\chi$ is the non-principal Dirichlet character modulo $4$.\\
This proves the theorem.\\\\
\textbf{Concluding Remarks:} Note that we have the expected upper bounds, namely, 
$$\int_{T}^{2T} \left|\zeta \left( \dfrac{5}{7}+it \right) \right|^{12}dt \ll T^{1+\epsilon}$$
and
$$\int_{T}^{2T} \left|L \left( \dfrac{5}{8}+it,f \right) \right|^{4}dt \ll T^{1+\epsilon},$$
uniformly for $T\geq1 $ (see \cite{ivic,good1982square}). Even if we move the line of integration to $\Re(s)=\frac{5}{7}$ and $\Re(s)=\frac{5}{8}$ pertaining to $l_{1}(n)$ and $v_{1}(n)$ respectively, and using the arguments of this paper, we end up with the same error term as stated in the Theorem 1.1.

\subsection*{Acknowledgements}
The first author, Anubhav Sharma, wishes to express his gratitude to the University of Hyderabad for
its financial support and IoE's performance based publication incentive towards his Ph.D Program.

\end{document}